\newcommand{\R}{\mathbb{R}}
\newcommand{\eps}{\varepsilon}
\newtheorem*{lemma*}{Lemma}
\title{A Short Proof for Gap Independence of Simultaneous Iteration}
\author{Edo Liberty\\ Yahoo Research}
\date\nonumber
\begin{document}
\maketitle
\pagenumbering{gobble}
\vspace{-.5cm}
This note provides a very short proof of a spectral gap independent property of the simultaneous iterations algorithm for finding the top singular space of a matrix \cite{RokhlinST09,HalkoMT2011,MuscoM15,WittenE15}. The proof is terse but completely self contained and should be accessible to the linear algebra savvy reader. 

\begin{lemma*} Let $A \in \R^{n \times m}$ be an arbitrary matrix and let $G \in \R^{m \times k}$ be a matrix of i.i.d.\ random Gaussian entries. 
Let $t = c\cdot \log(n/\eps)/\eps$ and $Z = \operatorname{span}((AA^T)^t A G)$ then with high probability depending only on a universal constant $c$
\[
||A - ZZ^TA|| \le (1+\eps)\sigma_{k+1}
\]
\end{lemma*}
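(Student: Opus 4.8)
\medskip
\noindent\textbf{Proof proposal.}\; The plan has four parts: reduce to diagonal $A$; prove a ``power boosting'' inequality transferring a low-rank approximation bound for the boosted matrix $B:=(AA^{T})^{t}A$ to $A$ itself; establish the standard deterministic sketching bound for $B$, splitting the spectrum \emph{exactly} at index $k$; and finally observe that the crude $\operatorname{poly}(n)$ factor this produces is rendered harmless once it is raised to the power $1/(2t+1)$. For the reduction, write the SVD $A=U\Sigma V^{T}$; then $(AA^{T})^{t}AG=U\big((\Sigma\Sigma^{T})^{t}\Sigma\big)(V^{T}G)$, where $V^{T}G$ is again i.i.d.\ Gaussian and $(\Sigma\Sigma^{T})^{t}\Sigma$ is diagonal with entries $\sigma_{i}^{2t+1}$. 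Since left-multiplying an orthonormal basis by the orthogonal $U$ leaves $\|A-ZZ^{T}A\|=\|(I-ZZ^{T})A\|$ unchanged, we may assume $A=\Sigma$ is (rectangular) diagonal, so $B:=(AA^{T})^{t}A$ is diagonal with entries $\sigma_{i}^{2t+1}$ in decreasing order, $Y:=BG$, $Z$ is an orthonormal basis of $\operatorname{range}(Y)$, and $P:=ZZ^{T}$. If $\sigma_{k+1}=0$ then $\operatorname{rank}(B)\le k$ and almost surely $\operatorname{range}(Y)=\operatorname{range}(B)=\operatorname{range}(A)$, so $(I-P)A=0$; hence assume $\sigma_{k}\ge\sigma_{k+1}>0$, which forces $k<\min(n,m)$.

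\emph{Power boosting.} I would show $\|(I-P)A\|\le\|(I-P)B\|^{1/(2t+1)}$ for every orthogonal projection $P$. Put $M=I-P$ and $C=AA^{T}$. Then $\|MA\|^{2}=\lambda_{\max}(MCM)=\max\{u^{T}Cu:\ u\in\operatorname{range}(M),\ \|u\|=1\}$, and likewise $\|MB\|^{2}=\max\{u^{T}C^{2t+1}u:\ \ldots\}$ because $BB^{T}=(AA^{T})^{2t+1}$. Taking a unit $u\in\operatorname{range}(M)$ that attains the first maximum and expanding it in an eigenbasis of $C$, Jensen's inequality for the convex map $x\mapsto x^{2t+1}$ on $[0,\infty)$ gives $u^{T}C^{2t+1}u\ge(u^{T}Cu)^{2t+1}$, so $\|MB\|^{2}\ge\|MA\|^{2(2t+1)}$, as claimed. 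The point of routing through $B$ is exactly ``gap independence'': applying the deterministic bound below to $A$ directly would produce the factor $(\sigma_{k+1}/\sigma_{k})^{2t}$, useless when there is no spectral gap, whereas in $B$ the split is at index $k$ and the analogous factor is $(\sigma_{k+1}^{2t+1}/\sigma_{k}^{2t+1})$, which cancels cleanly.

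\emph{Deterministic bound for $B$.} Let $G_{\le k}$ be the top $k$ rows of $G$ (almost surely invertible), $G_{>k}$ the rest, $D_{1}=\operatorname{diag}(\sigma_{1}^{2t+1},\dots,\sigma_{k}^{2t+1})$, and $D_{2}$ the diagonal block carrying the remaining $\sigma_{i}^{2t+1}$, so $\|D_{2}\|=\sigma_{k+1}^{2t+1}$. In block-row form $Y=\begin{pmatrix}D_{1}G_{\le k}\\ D_{2}G_{>k}\end{pmatrix}$, and choosing $c=G_{\le k}^{-1}D_{1}^{-1}f$ shows $\begin{pmatrix}f\\ Ff\end{pmatrix}\in\operatorname{range}(Y)$ for every $f$, where $F:=D_{2}G_{>k}G_{\le k}^{-1}D_{1}^{-1}$. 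Hence $\operatorname{range}\begin{pmatrix}I_{k}\\ F\end{pmatrix}\subseteq\operatorname{range}(Y)$, so $P$ dominates the projection $\Pi_{0}$ onto that subspace, whence $I-P\preceq I-\Pi_{0}$ and $\|(I-P)B\|^{2}=\lambda_{\max}(B^{T}(I-P)B)\le\lambda_{\max}(B^{T}(I-\Pi_{0})B)=\|(I-\Pi_{0})B\|^{2}$. Evaluating $I-\Pi_{0}$ against $Bx$ for a unit vector $x$ and using the feasible point $f=D_{1}x_{\le k}$ gives $\operatorname{dist}\!\big(Bx,\operatorname{range}\begin{pmatrix}I_{k}\\F\end{pmatrix}\big)\le\|(Bx)_{>k}\|+\|FD_{1}x_{\le k}\|\le\sigma_{k+1}^{2t+1}\big(1+\|G_{>k}G_{\le k}^{-1}\|\big)$, since $FD_{1}=D_{2}G_{>k}G_{\le k}^{-1}$ and $\|D_{2}\|=\sigma_{k+1}^{2t+1}$; so $\|(I-P)B\|\le\sigma_{k+1}^{2t+1}\big(1+\|G_{>k}G_{\le k}^{-1}\|\big)$.

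\emph{Conclusion.} Combining the last two steps, $\|(I-P)A\|\le\sigma_{k+1}\big(1+\|G_{>k}G_{\le k}^{-1}\|\big)^{1/(2t+1)}$. Standard Gaussian estimates give $\|G_{>k}\|\le O(\sqrt{m})$ and $\sigma_{\min}(G_{\le k})\ge n^{-O(1)}$, each with high probability, so $1+\|G_{>k}G_{\le k}^{-1}\|\le\operatorname{poly}(n)$; since $t=c\log(n/\eps)/\eps$, raising to the power $1/(2t+1)$ yields $e^{O(\log n)/t}=e^{O(\eps)/c}\le1+\eps$ once $c$ is a large enough universal constant, giving $\|A-ZZ^{T}A\|\le(1+\eps)\sigma_{k+1}$ with high probability. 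The bulk of the work is the bookkeeping above; the two Gaussian tail bounds are routine, and the only point really needing care is to keep the spectrum split inside $B$, so that the sole surviving $n$-dependence sits under the harmless $1/(2t+1)$ exponent.
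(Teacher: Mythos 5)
Your proof is correct, but it takes a genuinely different route from the paper's. You reconstruct the classical Halko--Martinsson--Tropp power-scheme analysis: split the spectrum exactly at $k$, prove the deterministic structural bound $\|(I-P)B\|\le\sigma_{k+1}^{2t+1}\bigl(1+\|G_{>k}G_{\le k}^{-1}\|\bigr)$ for the boosted matrix $B=(AA^T)^tA$, and transfer it back to $A$ via the Jensen-based inequality $\|(I-P)A\|\le\|(I-P)B\|^{1/(2t+1)}$, so the $\operatorname{poly}(n)$ Gaussian factor is absorbed by the $(2t+1)$-th root. The paper instead works directly with the variational form $\max\{\|y^TS\| : y^TS^{2t+1}G'=0\}$, extracts the per-coordinate bound $|y_1(i)|\le(\sigma_{k+1}/\sigma_i)^{2t+1}\|G'_2\|\|G'^{-1}_1\|$, and obtains gap independence by splitting the spectrum at the data-dependent index $k'$ where the singular values first drop below $(1+\eps)\sigma_{k+1}$: coordinates $i\le k'$ are crushed by the factor $(1+\eps)^{-4t}$, while coordinates $i>k'$ contribute at most $(1+\eps)^2\sigma_{k+1}^2$ trivially. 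Your version is more modular (two reusable lemmas, no $k'$ trick, and the ``where does gap independence come from'' question is answered by a single clean inequality) at the price of the Loewner-order and Jensen machinery; the paper's is a single self-contained chain of scalar inequalities, which is the point of the note. One small bookkeeping item in your conclusion: only the rows of $G_{>k}$ that survive multiplication by $D_2$, i.e.\ rows $k+1$ through $\min(n,m)$, enter the bound, so the relevant operator norm is $O(\sqrt{n})$ rather than $O(\sqrt{m})$ --- which is exactly what you need for the final $\operatorname{poly}(n)$ and matches the paper's implicit restriction of $G'_2$ to $n-k$ rows.
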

\begin{proof}
$||A - ZZ^TA|| = \max_{x :\|x\|=1}  \|x^T A\|$ such that $\|x^TZ\| = 0$.
Using the SVD of $A$ we change variables $A = USV^T$, $x = Uy$ and $G' = V^TG$.
Note that $G'$ is also a matrix of i.id.\ Gaussian entries because $V$ is orthogonal.
We get $||A - ZZ^TA|| = \max_{y:\|y\|=1}  \|y^TS\|$ s.t.\ $y^TS^{2t+1}G' = 0$.
We now break $y$, $S$, and $G'$ to two blocks each such that
\[
y =
\left(\begin{array}{c}
y_1 \\ \hline
y_2 \\
\end{array}\right)
\mbox{,\;\;}
S = \left(\begin{array}{c|c}
S_1 & 0 \\ \hline
0 & S_2 \\
\end{array}\right)
\mbox{,\;\;}
G' = \left(\begin{array}{c}
G'_1  \\ \hline
G'_2 \\
\end{array}\right)
\] 
and $y_1 \in \R^{k}$, $y_2 \in \R^{n-k}$, $S_1 \in \R^{k \times k}$, $S_2 \in \R^{(n-k) \times (n-k)}$, $G'_1 \in \R^{k \times k}$, and $G'_2 \in \R^{(n-k) \times k}$.
\begin{eqnarray*}
0 &=& \|y^T S^{2t+1} G'\| = \|y_1^T S^{2t+1}_1 G'_1+  y_2^T S^{2t+1}_2 G'_2\| \\
&\ge& \|y_1^T S^{2t+1}_1 G'_1\| -  \|y_2^T S^{2t+1}_2 G'_2\| \\
&\ge& \|y_1^T S^{2t+1}_1\|/\|G'^{-1}_1\| - \|y_2^T\| \cdot \|S^{2t+1}_2 \| \cdot  \|G'_2\| \\
&\ge& |y_1(i)| \sigma_{i}^{2t+1}/\|G'^{-1}_1\| - \sigma_{k+1}^{2t+1} \cdot  \|G'_2\| \ .
\end{eqnarray*}
\noindent This gives that $|y_1(i)| \le (\sigma_{k+1}/\sigma_i)^{2t+1}\|G'_2\| \|G'^{-1}_1\|$. Equipped with this inequality we bound the expression $\|y^TS\|$.
Let $1 \le k' \le k$ be such that $\sigma_{k'} \ge (1+\eps)\sigma_{k+1}$ and $\sigma_{k'+1} < (1+\eps)\sigma_{k+1}$. If no such $k'$ exists the claim is trivial.
\begin{eqnarray}
||A - ZZ^TA||^2 &=& \|y^TS\|^2 = \sum_{i=1}^{k'}y^2_i \sigma_i^2 + \sum_{i=k'+1}^{n}y^2_i \sigma_i^2 \\
&\le& ( \|G'_2\|^2 \|G'^{-1}_1\|^2 \sum_{i=1}^{k'}(\sigma_{k+1}/\sigma_i)^{4t}  \sigma_{k+1}^2 ) + (1+\eps)\sigma_{k+1}^2 \\
&\le& \left[ \|G'_2\|^2 \|G'^{-1}_1\|^2 k (1/(1+\eps))^{4t} + (1+\eps)\right]\sigma_{k+1}^2 \le (1+2\eps)\sigma_{k+1}^2
\end{eqnarray}
The last step is correct as long as $ \|G'_2\|^2 \|G'^{-1}_1\|^2 k (1/(1+\eps))^{4t} \le \eps \sigma^2_{k+1}$ which holds for $t \ge \log(\|G'_2\|^2 \|G'^{-1}_1\|^2 k/\eps) /4\log(1+\eps) = O(\log(n/\eps)/\eps)$. The last inequality uses the fact that $G'_1$ and $G'_2$ are random gaussian due to rotational invariance of the Gaussian distribution. This means that $\|G'_2\|^2 \|G'^{-1}_1\|^2 = O(\operatorname{poly}(n))$ with high probability \cite{Rudelson08}.
Finally, $||A - ZZ^TA|| \le \sqrt{1+2\eps}\cdot\sigma_{k+1} \le (1+\eps)\sigma_{k+1}$.
\end{proof}


\end{document}